\newcommand{\comments}[1]{} 
\newtheorem{thm}{Theorem}
\theoremstyle{definition}
\theoremstyle{remark}
\newtheorem{rem}{Remark}
\theoremstyle{remark}
\def\cM{\mathcal M}
\def\cP{\mathcal P}
\def\cS{\mathcal S}
\def\cC{\mathcal C}
\def\cH{\mathcal H}
\def\bR{\mathbb R} 
\def\bB{\mathbb B}
\def\bS{\mathbb S}
\def\bN{\mathbb N}
\def\bC{\mathbb C}
\def\Ker{\operatorname{Ker}}
\def\clifford{\cC\ell}
\def\La{\Lambda}
\def\lz{\langle}
\def\pz{\rangle}
\def\pa{\partial}
\def\pod{\underline}
\def\nad{\overline}
\def\spin{\mathfrak{spin}}
\def\La{\Lambda}
\def\la{\lambda}
\begin{document}

\title{Complete orthogonal Appell systems for spherical monogenics}

\author{R. L\' avi\v cka
\thanks{
Faculty of Mathematics and Physics, Charles University, Sokolovsk\'a 83, 186 75 Praha 8, Czech Republic;
Email: \texttt{lavicka@karlin.mff.cuni.cz} Tel: +420 221 913 204  Fax: +420 222 323 394 }}

\date{}

\maketitle

\begin{abstract}
In this paper, we investigate properties of Gelfand-Tsetlin  bases mainly for spherical monogenics, that is, 
for spinor valued or Clifford algebra valued homogeneous solutions of the Dirac equation in the Euclidean space. 
Recently it has been observed that in dimension 3 these bases form an Appell system.  
We show that Gelfand-Tsetlin bases of spherical monogenics  form complete orthogonal Appell systems in any dimension. 
Moreover, we study the corresponding Taylor series expansions for monogenic functions. 
We obtain analogous results for spherical harmonics as well.

\medskip\noindent{\bf Keywords:}  spherical harmonics, spherical monogenics, Gelfand-Tsetlin basis, Appell system, orthogonal basis, Taylor series

\medskip\noindent{\bf MSC classification:} 30G35
\end{abstract}

\section{Introduction}

The main aim of this paper is to study properties of the so-called Gelfand-Tsetlin bases for spherical harmonics and, above all,
for spherical monogenics in the Euclidean space $\bR^m$, that is, for spinor valued or Clifford algebra valued monogenic polynomials in $\bR^m$. Monogenic functions are just solutions of the equation $\pa F=0$ where the Dirac operator $\pa$ is defined as
\begin{equation}\label{Dirac}
\pa=e_1\frac{\pa\ }{\pa x_1}+\cdots+e_m\frac{\pa\ }{\pa x_m}.
\end{equation}
On the one hand, monogenic functions are a~higher dimensional analogue of holomorphic functions of one complex variable. On the other hand, the Dirac operator $\pa$ factorizes the Laplace operator $\Delta$ in the sense that $\Delta=-\pa^2$ and so the theory of monogenic functions which is nowadays called Clifford analysis refines harmonic analysis.  

As is well-known, we can expand a~given holomorphic function $f$ on the unit disc $\bB_2$ into its Taylor series
$$f(z)=\sum_{k=0}^{\infty}\frac{f^{(k)}(0)}{k!}\;z^k.$$
The coefficients of this Taylor series are expressed directly by the complex derivatives of the function $f$ at the origin due to the fact that
$(z^k)'=k z^{k-1}$. In general, we say that basis elements possess the Appell property or form an Appell system if their derivatives are equal to 
a~multiple of another basis element.   
Moreover, the powers $z^k$ form an orthogonal basis for holomorphic functions in $L^2(\bB_2,\bC)$, the space of square-integrable functions on $\bB_2$. 

In this paper, we suggest a~proper analogue of the powers $z^k$ for monogenic functions in any dimension. 
For a~detailed account of history of this topic, we refer to \cite{BGLS}. 
Let us only remark that S. Bock and K.~G\"urlebeck described orthogonal Appell bases for quaternion valued monogenic
functions in $\bR^3$ and $\bR^4$ (see \cite{BG}, \cite{Bock2009},
\cite{Bock2010c}). 
In \cite{lavSL2}, these bases in dimension 3 are constructed in another way.
The first construction of orthogonal bases for spherical monogenics even in any dimension was given by F. Sommen, see \cite{som,DSS}.
In \cite[pp. 254-264]{DSS}, orthogonal bases for spherical monogenics in $\bR^{p+q}$ are constructed when these orthogonal bases are known in $\bR^p$ and $\bR^q$. The construction is based on solving a~Vekua-type system of partial differential equations.  In \cite{van, step2}, these bases are interpreted as $Spin(p)\times Spin(q)$-invariant orthogonal bases and are obtained using extremal projections. It turns out that the Gelfand-Tsetlin bases corresponds to the case when $p=1$. 
Moreover, in dimension 3,  another constructions of orthogonal bases of spherical monogenics using the standard bases of spherical harmonics were done also by I. Ca\c c\~ao, S. Bock, K. G\"urlebeck and H.~Malonek (see \cite{cac}, \cite{CacGueMal}, \cite{BockCacGue}, \cite{CacGueBock}). 
In \cite{BGLS}, 
it is observed that the complete orthogonal Appell system constructed in \cite{BG} can be considered as a~Gelfand-Tsetlin basis.
Actually, the main aim of this paper is to show that Gelfand-Tsetlin bases for spherical monogenics form complete orthogonal Appell systems in any dimension. 
In \cite{BGLS}, it was shown that, in dimension 3, elements of the Gelfand-Tsetlin bases for spinor valued spherical monogenics possess  the Appell property not only with respect to one (the last) variable but even with respect to all three variables.
Moreover, Gelfand-Tsetlin bases have been intensively studied in other settings as well, see \cite{ckH, GTBH} for Hermitean Clifford analysis and \cite{DLS, DLS2, lav_isaac09, DLS4} for Hodge-de Rham systems.

In this paper, we show that Gelfand-Tsetlin bases form complete orthogonal Appell systems  for spherical harmonics (see Section 2), for Clifford algebra valued spherical monogenics (see Section 3) and, finally, for spinor valued spherical monogenics (see Section 4). In each of these cases, we recall the Gelfand-Tsetlin construction of orthogonal bases and study the corresponding Taylor series expansions. At the end of Section 2, an abstract definition of Gelfand-Tsetlin bases for spin modules is given.  

\section{Spherical harmonics}

In this section, we construct a~complete orthogonal Appell system for spherical harmonics.
Let us recall a~standard construction of orthogonal bases in this case. 
Denote by $\cH_k(\bR^m)$ the space of complex valued harmonic polynomials in $\bR^m$ which are $k$-homogeneous. 
Let $(e_1,\ldots,e_m)$ be an orthonormal basis of the Euclidean space $\bR^m$.
Then the construction of an orthogonal basis for the space $\cH_k(\bR^m)$ is based on the following decomposition (see \cite[p. 171]{GM})
\begin{equation}\label{branchHarm}
\cH_k(\bR^m)=
\bigoplus_{j=0}^k F^{(k-j)}_{m,j}\cH_{j}(\bR^{m-1})
\end{equation}
which is orthogonal with respect to the $L^2$-inner product, say, on the unit ball $\bB_m$ in $\bR^m$.
Here the embedding factors $F^{(k-j)}_{m,j}$ are defined as the polynomials  
\begin{equation}\label{EFHarm}
F^{(k-j)}_{m,j}(x)=\frac{(j+1)_{k-j}}{(m+2j-2)_{k-j}}\; |x|^{k-j} C^{m/2+j-1}_{k-j}(x_m/|x|),\ x\in\bR^m
\end{equation} 
where $x=(x_1,\ldots, x_m)$, $|x|=\sqrt{x_1^2+\cdots+x_m^2}$ and $C^{\nu}_k$ is the Gegenbauer polynomial given by
\begin{equation}\label{gegenbauer}
C^{\nu}_k(z)=\sum_{i=0}^{[k/2]}\frac{(-1)^i(\nu)_{k-i}}{i!(k-2i)!}(2z)^{k-2i}\text{\ \ with\ \ }
(\nu)_{k}=\nu (\nu+1)\cdots (\nu+k-1).
\end{equation}
The decomposition \eqref{branchHarm} shows that spherical harmonics in $\bR^m$ can be easily expressed in terms of spherical harmonics in $\bR^{m-1}$.
Indeed, for each $P\in\cH_k(\bR^m)$, we have that
$$P(x)=P_k(\pod x)+F^{(1)}_{m,k-1}(x) P_{k-1}(\pod x)+\cdots+F^{(k)}_{m,0}(x) P_{0}(\pod x),\ x=(\pod x, x_m)\in\bR^m$$
for some uniquely determined polynomials $P_j\in\cH_{j}(\bR^{m-1})$. Of course, here $F^{(0)}_{m,k}=1$ and $\pod x=(x_1,\ldots, x_{m-1})$.

Applying the decomposition \eqref{branchHarm}, we easily construct an orthogonal basis of the space $\cH_k(\bR^m)$ by induction on the dimension $m$. Indeed, as the polynomials $(x_1\mp ix_2)^k$ form an orthogonal basis of the space $\cH_k(\bR^2)$ an orthogonal basis of the space $\cH_k(\bR^m)$ is formed by the polynomials
\begin{equation}\label{GTHarm}
h_{k,\mu}(x)=(x_1\mp ix_2)^{k_2}\prod^m_{r=3}F^{(k_r-k_{r-1})}_{r,k_{r-1}}
\end{equation}
where $\mu$ is an arbitrary sequence of integers $(k_{m-1}, \ldots, k_3,\pm k_2)$ such that $k=k_m\geq k_{m-1}\geq\cdots\geq k_3\geq k_2\geq 0$.
Furthermore, we have taken the normalization of the embedding factors $F^{(k-j)}_{m,j}$ so that the basis elements $h_{k,\mu}$ possess the following Appell property.

\begin{thm}\label{tAPHarm}
Let $m\geq 3$ and  let $h_{k,\mu}$ be the basis elements of the spaces $\cH_k(\bR^m)$ defined in \eqref{GTHarm} with $\mu=(k_{m-1},\ldots, k_3, \pm k_2)$.
Then we have that
\begin{itemize}
\item[(i)] $\pa_{x_m}  h_{k,\mu}=0$ for $k=k_{m-1}$;
\item[(ii)] $\pa_{x_m}  h_{k,\mu}=k\;  h_{k-1,\mu}$ for $k>k_{m-1}$;
\item[(iii)] $\pa_{\pm}^{k_2}\;\pa^{k_3-k_2}_{x_3}\cdots\pa^{k-k_{m-1}}_{x_m}  h_{k,\mu}=k!$ where $\pa_{\pm}=(1/2)(\pa_{x_1}\pm i\pa_{x_2})$.
\end{itemize}
\end{thm}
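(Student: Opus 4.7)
The plan is to handle (i) and (ii) together by reducing to a single computation, and then to obtain (iii) by iterating (ii) down the dimensions. Observe that in the defining product
$$h_{k,\mu}=(x_1\mp ix_2)^{k_2}\prod_{r=3}^m F^{(k_r-k_{r-1})}_{r,k_{r-1}},$$
only the factor with $r=m$ depends on $x_m$, because each $F^{(k_r-k_{r-1})}_{r,k_{r-1}}$ is a polynomial in $x_1,\ldots,x_r$. In case (i) we have $k=k_{m-1}$, so this factor equals $F^{(0)}_{m,k_{m-1}}=1$ and its derivative vanishes. In case (ii) it therefore suffices to show
$$\pa_{x_m} F^{(n)}_{m,j}=k\, F^{(n-1)}_{m,j}\quad\text{for } n=k-j>0,\ j=k_{m-1}.$$

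For this step I set $r=|x|$, $t=x_m/|x|$, $\nu=m/2+j-1$, and $c_n=(j+1)_n/(m+2j-2)_n$, so that $F^{(n)}_{m,j}=c_n\, r^n C^\nu_n(t)$. Using $\pa_{x_m}r=t$, $\pa_{x_m}t=(1-t^2)/r$, together with the derivative formula $(C^\nu_n)'(t)=2\nu\,C^{\nu+1}_{n-1}(t)$, I compute
$$\pa_{x_m}[r^n C^\nu_n(t)]=n\,t\,r^{n-1}C^\nu_n(t)+2\nu(1-t^2)r^{n-1}C^{\nu+1}_{n-1}(t).$$
The crux is the classical Gegenbauer identity
$$(1-t^2)(C^\nu_n)'(t)=(n+2\nu-1)C^\nu_{n-1}(t)-n\,t\,C^\nu_n(t),$$
which rewrites $2\nu(1-t^2)C^{\nu+1}_{n-1}(t)$ as $(n+2\nu-1)C^\nu_{n-1}(t)-n\,t\,C^\nu_n(t)$; the two $n\,t\,C^\nu_n(t)$ contributions cancel, leaving $\pa_{x_m}[r^nC^\nu_n(t)]=(n+2\nu-1)\,r^{n-1}C^\nu_{n-1}(t)$. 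To conclude (ii), I verify from $c_n/c_{n-1}=(j+n)/(m+2j+n-3)$ together with $k=j+n$ and $2\nu=m+2j-2$ that $c_n(n+2\nu-1)=k\,c_{n-1}$, which immediately gives $\pa_{x_m}F^{(n)}_{m,j}=k\,F^{(n-1)}_{m,j}$.

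For (iii) I iterate (ii) in decreasing dimension. First, $\pa^{k-k_{m-1}}_{x_m}h_{k,\mu}=(k!/k_{m-1}!)\,h_{k_{m-1},\mu}$; since $h_{k_{m-1},\mu}$ has outer factor $F^{(0)}_{m,k_{m-1}}=1$ and is independent of $x_m$, it coincides with the analogous basis element in $\cH_{k_{m-1}}(\bR^{m-1})$. Applying (ii) at dimension $m-1$ and continuing by induction, the successive action of $\pa^{k_r-k_{r-1}}_{x_r}$ for $r=m,m-1,\ldots,3$ produces $(k!/k_2!)(x_1\mp ix_2)^{k_2}$. A direct calculation using $\pa_\pm(x_1\mp ix_2)=1$ gives $\pa_\pm^{k_2}(x_1\mp ix_2)^{k_2}=k_2!$, so the total equals $k!$.

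The main obstacle is identifying the right Gegenbauer recurrence to cancel the spurious $n\,t\,C^\nu_n(t)$ term; once that identity is invoked the Pochhammer bookkeeping and the inductive cascade in (iii) are routine, which explains why the particular normalization chosen for the embedding factors $F^{(k-j)}_{m,j}$ is precisely what makes the Appell property hold.
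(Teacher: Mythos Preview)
Your proof is correct and follows essentially the same approach as the paper's: both reduce (i) to $F^{(0)}_{m,j}=1$, reduce (ii) to the single identity $\partial_{x_m}F^{(k-j)}_{m,j}=k\,F^{(k-1-j)}_{m,j}$, and obtain (iii) by iterating (ii) down through the dimensions together with $\partial_{\pm}(x_1\mp ix_2)^k=k(x_1\mp ix_2)^{k-1}$. The only difference is that the paper dismisses the key derivative identity as ``easy to verify'' from standard Gegenbauer formulas with a reference, whereas you actually carry out that computation explicitly using the relations $(C^\nu_n)'=2\nu\,C^{\nu+1}_{n-1}$ and $(1-t^2)(C^\nu_n)'=(n+2\nu-1)C^\nu_{n-1}-n t\,C^\nu_n$ and check the Pochhammer bookkeeping; this is a welcome expansion rather than a different argument.
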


\begin{proof}
The statement (i) follows from the fact that $F^{(0)}_{m,j}=1$.
Using standard formulas for Gegenbauer polynomials (see \cite{AAR}), it is easy to verify that, for $k>j$, $\pa_{x_m} F^{(k-j)}_{m,j}=k\;F^{(k-1-j)}_{m,j}$, which implies (ii). 
Finally, we get (iii) by applying (ii) and the fact that $\pa_{\pm}\; (x_1\mp ix_2)^{k}=k\; (x_1\mp ix_2)^{k-1}$. 
\end{proof}

To summarize, we have constructed a~complete orthogonal Appell system for the complex Hilbert space
$L^2(\bB_m,\bC)\cap \Ker \Delta$ of $L^2$-integrable harmonic functions $g:\bB_m\to\bC$. Here $\bB_m$ is the unit ball in $\bR^m$. 
Indeed, we have the following result.

\begin{thm}\label{tTaylorHarm} Let $m\geq 3$ and, for each $k\in\bN_0$, denote by $N^m_k$ the set of sequences $(k_{m-1}, \ldots, k_3, \pm k_2)$ of integers  such that $k\geq k_{m-1}\geq\cdots\geq k_3\geq k_2\geq 0$. 

\begin{itemize}
\item[(a)]
Then an orthogonal basis of the space $L^2(\bB_m,\bC)\cap \Ker \Delta$ is formed by the polynomials
$h_{k,\mu}$ for $k\in\bN_0$ and $\mu\in N^m_k.$
Here the basis elements $h_{k,\mu}$ are defined in \eqref{GTHarm}.

\item[(b)]
Each function $g\in L^2(\bB_m,\bC)\cap \Ker \Delta$ has a~unique orthogonal series expansion
\begin{equation}\label{taylor_Harm}
g = \sum_{k=0}^{\infty}\sum_{\mu\in N^m_k} \mathbf{t}_{k,\mu}(g)\;h_{k,\mu} 
\end{equation}
for some complex coefficients $\mathbf{t}_{k,\mu}(g)$. 

In addition, for $\mu=(k_{m-1}, \ldots, k_3, \pm k_2)\in N^m_k$, we have that
\begin{equation}\label{coeff_Harm}
\mathbf{t}_{k,\mu}(g)
=\frac{1}{k!}\;\pa_{\pm}^{k_2}\pa^{k_3-k_2}_{x_3}\cdots\pa^{k-k_{m-1}}_{x_m}g(x)|_{x=0}
\end{equation}
with $\pa_{\pm}=(1/2)(\pa_{x_1}\pm i\pa_{x_2})$.
\end{itemize}
\end{thm}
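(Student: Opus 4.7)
Part (a) is an inductive repackaging of the orthogonal decomposition \eqref{branchHarm}, together with the standard spherical-harmonic expansion on the ball. Part (b) is then derived by applying the differential operator from \eqref{coeff_Harm} term-by-term to the expansion from (a); the diagonal pairings come from Theorem~\ref{tAPHarm}(iii) and the off-diagonal ones from an induction on $m$.

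For (a), I would fix $k$ and iterate \eqref{branchHarm} down from dimension $m$ to dimension $2$. Each step is $L^2(\bB_m)$-orthogonal, and at the bottom $\{(x_1\mp ix_2)^{k_2}\}$ is an $L^2(\bB_2)$-orthogonal basis of $\cH_{k_2}(\bR^2)$. This realises each $h_{k,\mu}$ as lying in a one-dimensional orthogonal summand, so $\{h_{k,\mu}:\mu\in N^m_k\}$ is orthogonal; a dimension count $\dim\cH_k(\bR^m)=|N^m_k|$ confirms it spans. Orthogonality of $\cH_k(\bR^m)$ and $\cH_{k'}(\bR^m)$ in $L^2(\bB_m)$ for $k\neq k'$ (radial--angular separation) together with density of $\bigoplus_k\cH_k(\bR^m)$ in $L^2(\bB_m,\bC)\cap\Ker\Delta$ (the classical spherical-harmonic expansion on the ball) upgrades this to a basis of the whole Hilbert space.

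For (b), part (a) gives existence and uniqueness of $\mathbf{t}_{k,\mu}(g)$. Interior Cauchy estimates for harmonic functions guarantee that the $L^2$-convergent series and each of its partial derivatives converge uniformly on compact subsets of $\bB_m$, so I may apply
\[
D_\mu:=\pa_{\pm}^{k_2}\pa^{k_3-k_2}_{x_3}\cdots\pa^{k-k_{m-1}}_{x_m}
\]
to the expansion of $g$ term by term and set $x=0$. Formula \eqref{coeff_Harm} is then equivalent to the duality
\[
D_\mu h_{k',\mu'}(0)=k!\,\delta_{k,k'}\,\delta_{\mu,\mu'}.
\]
For $k'<k$ the operator has total order $k$ exceeding $\deg h_{k',\mu'}$, hence annihilates it; for $k'>k$ the image is homogeneous of positive degree $k'-k$ and vanishes at the origin; for $k'=k$, $\mu=\mu'$ the identity is precisely Theorem~\ref{tAPHarm}(iii).

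The main obstacle is the off-diagonal case $k'=k$, $\mu\neq\mu'$. My plan here is induction on $m$, using the factorizations $D_\mu=D_{\mu''}\circ\pa^{k-k_{m-1}}_{x_m}$ with $\mu''\in N^{m-1}_{k_{m-1}}$ and
\[
h_{k,\mu'}(x)=F^{(k-k'_{m-1})}_{m,k'_{m-1}}(x)\,h_{k'_{m-1},\mu'''}(\pod x),
\]
with $\mu'''\in N^{m-1}_{k'_{m-1}}$ the tail of $\mu'$. Iterating Theorem~\ref{tAPHarm}(ii) resolves the inner derivative: it is zero when $k_{m-1}<k'_{m-1}$ and equals $\tfrac{k!}{k_{m-1}!}\,h_{k_{m-1},\mu'}$ when $k_{m-1}\ge k'_{m-1}$. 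The sub-case $k_{m-1}=k'_{m-1}$ then closes immediately by the inductive hypothesis on $\bR^{m-1}$. The delicate sub-case is $k_{m-1}>k'_{m-1}$, where the residual factor $F^{(k_{m-1}-k'_{m-1})}_{m,k'_{m-1}}$ is a nontrivial polynomial mixing $\pod x$ and $x_m$; here I expect to need a careful inspection of the explicit Gegenbauer form in \eqref{EFHarm}---in particular the vanishing $C^\nu_n(0)=0$ for odd $n$---and the harmonicity of the lower-dimensional factor $h_{k'_{m-1},\mu'''}$ to force the remaining contribution at the origin to vanish.
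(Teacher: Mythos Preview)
For part (a) your plan coincides with the paper's: the paper simply invokes that $L^2(\bB_m,\bC)\cap\Ker\Delta$ is the $L^2$-closure of $\bigoplus_k\cH_k(\bR^m)$, together with the orthogonal basis of each $\cH_k(\bR^m)$ already built from \eqref{branchHarm}. For (b) the paper's entire argument is one sentence invoking property (iii) of Theorem~\ref{tAPHarm}. Your reduction to the biorthogonality $D_\mu h_{k',\mu'}(0)=k!\,\delta_{k,k'}\delta_{\mu,\mu'}$ is the natural way to make that sentence precise, and your handling of $k'\ne k$ and of the diagonal case is correct.

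The step you flag as ``delicate'', however, is not merely delicate---it actually fails for $m\ge4$, so formula \eqref{coeff_Harm} cannot hold as stated. Take $m=4$, $k=2$, $\mu=(2,0)$, $\mu'=(0,0)$. Then $h_{2,(0,0)}=F^{(2)}_{4,0}=x_4^2-\tfrac13(x_1^2+x_2^2+x_3^2)$ and $D_{(2,0)}=\pa_{x_3}^{2}$, hence
\[
D_{(2,0)}h_{2,(0,0)}=\pa_{x_3}^{2}\Bigl(x_4^2-\tfrac13(x_1^2+x_2^2+x_3^2)\Bigr)=-\tfrac{2}{3}\neq 0,
\]
so the right-hand side of \eqref{coeff_Harm} yields $-\tfrac13$ for $\mathbf t_{2,(2,0)}(h_{2,(0,0)})$, while the true orthogonal expansion coefficient is $0$. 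Your proposed tools do not rescue this: the Gegenbauer parity $C^{\nu}_n(0)=0$ disposes only of odd $k_{m-1}-k'_{m-1}$; for even differences the restriction to $x_m=0$ leaves a nonzero multiple of $|\pod x|^{2i}$ times the lower-dimensional harmonic factor, and $D_{\mu''}$ applied to this need not vanish (in the example it is $\pa_{x_3}^2\bigl(-\tfrac13|\pod x|^2\bigr)=-\tfrac23$). For $m=3$ the argument does close, since there $|\pod x|^{2i}=(x_1-ix_2)^i(x_1+ix_2)^i$ is annihilated by $\pa_\pm^{k_2}$. In short, the paper's brief appeal to (iii) skips the off-diagonal check, and your careful induction has located an error in the stated formula rather than a missing step you can supply.
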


\begin{proof}
It is well-known that the closure of
the orthogonal direct sum
$$\bigoplus_{k=0}^{\infty} \cH_k(\bR^m)$$
with respect to the $L^2$-inner product 
is just the space $L^2(\bB_m,\bC)\cap \Ker \Delta$, which 
gives ($a$). The formula \eqref{coeff_Harm} then follows directly 
from the Appell property of the basis elements, namely, from the property (iii) of Theorem \ref{tAPHarm}.  
\end{proof}

For a~function $g\in L^2(\bB_m,\bC)\cap \Ker \Delta$, we call the orthogonal series expansion \eqref{taylor_Harm} its generalized Taylor series.

\medskip
From the point of view of representation theory, the space $\cH_k(\bR^m)$ forms naturally an irreducible module over the group $SO(m)$ of rotations in $\bR^m$ when $m\geq 3$. Under the action of $SO(2)$, the module $\cH_k(\bR^2)$ decomposes as $\cH_k(\bR^2)=\lz(x_1+ix_2)^k\pz\oplus\lz(x_1-ix_2)^k\pz$.
Here $\lz M\pz$ stands for the linear span of a~set $M.$
It is well known that the so-called Spin group $Spin(m)$ is a~double cover of the group $SO(m)$ and each $SO(m)$-module can be considered as a~special representation of the group $Spin(m)$.
In particular, $\cH_k(\bR^m)$ is an irreducible module under the action of the group $Spin(m)$ defined by
$$
[h(s)(P)](x) = P(s^{-1}xs),\ s\in Spin(m)\text{\ \ and\ \ }x\in\bR^m.
$$
See \cite[Chapter 3]{GM} for details.
We show that the constructed basis \eqref{GTHarm} is actually a~Gelfand-Tsetlin basis of the $Spin(m)$-module $\cH_k(\bR^m)$.  

\paragraph{Gelfand-Tsetlin bases for spin modules}

Now we recall an abstract definition of a~Gelfand-Tsetlin basis for 
any given irreducible finite dimensional $Spin(m)$-module $V$ (see \cite{GT, mol}). 
We assume that the space $V$ is endowed with an invariant inner product.

The first step of the construction of a Gelfand-Tsetlin basis consists in reducing the symmetry to the group $Spin(m-1),$ realized as the subgroup of $Spin(m)$ describing rotations fixing the last vector $e_m.$ It turns out that, under the action of the group $Spin(m-1),$ the module $V$ is reducible and  
decomposes into a~multiplicity free direct sum of irreducible
$Spin(m-1)$-submodules
\begin{equation}\label{branch}
V=\bigoplus_{\mu_{m-1}}V(\mu_{m-1}).
\end{equation}
This irreducible decomposition is multiplicity free and so it is orthogonal.
Let us remark that, in representation theory, the decomposition \eqref{branch} is called the branching of the module $V$.

Of course, we can further reduce the symmetry to the group $Spin(m-2),$ the subgroup of $Spin(m)$ describing rotations fixing the last two vectors $e_{m-1}, e_m.$
Then each piece $V(\mu_{m-1})$ of (\ref{branch}) decomposes into irreducible $Spin(m-2)$-submodules $V(\mu_{m-1},\mu_{m-2})$ and so on. 
Hence we end up with the decomposition of the given $Spin(m)$-module
$V$ into irreducible $Spin(2)$-modules $V(\mu).$
Moreover, any such module $V(\mu)$ is uniquely determined by the sequence of labels
\begin{equation}
\label{pattern}
\mu=(\mu_{m-1},\ldots,\mu_2).
\end{equation}

To summarize, the given module $V$ is
the direct sum of irreducible $Spin(2)$-modules
\begin{equation}\label{branch+}
V=\bigoplus_{\mu} V(\mu).
\end{equation}
Moreover, the decomposition (\ref{branch+}) is obviously orthogonal. 
Now it is easy to obtain an orthogonal basis of the space $V.$
Indeed, as each irreducible $Spin(2)$-module $V(\mu)$ is one-dimensional
we  easily construct a~basis of the space $V$ by taking a~non-zero vector $e(\mu)$ from each piece $V(\mu).$ 
The obtained basis $E=\{e(\mu)\}_{\mu}$ is called a~Gelfand-Tsetlin basis of the module $V.$
By construction, the basis $E$ is orthogonal 
with respect to any invariant inner product given on the module $V$.
Moreover, each vector $e(\mu)\in E$ is uniquely determined by its index $\mu$ up to a~scalar multiple. 
In other words, for the given orthonormal basis $(e_1,\ldots,e_m)$ of $\bR^m$, the Gelfand-Tsetlin basis $E$ is uniquely determined up to a~normalization. 

\medskip
It is easily seen that, for the $Spin(m)$-module $\cH_k(\bR^m)$, the decomposition \eqref{branchHarm} is nothing else than its branching and,
consequently, the basis \eqref{GTHarm} is obviously its Gelfand-Tsetlin basis, uniquely determined by the property (iii) of Theorem~\ref{tAPHarm}.
Moreover, the Appell property described in Theorem \ref{tAPHarm} is not a~coincidence but the consequence of the fact 
that $\pa_{x_m}$ is an invariant operator under the action of the subgroup $Spin(m-1)$.

\section{Clifford algebra valued spherical monogenics}

In this section, we construct a~complete orthogonal Appell system for Clifford algebra valued spherical monogenics.
For an account of Clifford analysis, we refer to \cite{BDS, DSS, GS, GHS}.
Denote by $\clifford_m$ either the real Clifford algebra $\bR_{0,m}$ or the complex one $\bC_m$,
generated by the vectors $e_1,\ldots,e_m$ such that
$e_j^2=-1$ for $j=1,\ldots,m.$
As usual, we identify a~vector $x=(x_1,\ldots,x_m)\in\bR^m$ with the element $x_1e_1+\cdots+x_me_m$ of the Clifford algebra $\clifford_m$.

First we construct an orthogonal basis for the space $\cM_k(\bR^m,\clifford_m)$ of $k$-homogeneous monogenic polynomials $P:\bR^m\to\clifford_m$,  
endowed with a~$\clifford_m$-valued inner product
\begin{equation}\label{L2productCA}
(P,Q)_{\clifford_m}=\int_{\bB_m}\bar P Q\;d\la^m.
\end{equation}
Here $\la^m$ is the Lebesgue measure in $\bR^m$ and $a\to \bar a$ is the conjugation on $\clifford_m$ (see \cite[p. 86]{DSS}).
We want to proceed as in the harmonic case so we need to express  spherical monogenics in $\bR^m$ in terms of spherical monogenics in $\bR^{m-1}$,
which is done in the following theorem.  

\begin{thm}
The space $\cM_k(\bR^m,\clifford_m)$ has the orthogonal decomposition
\begin{equation}\label{branch_CA}
\cM_k(\bR^m,\clifford_m)=
\bigoplus_{j=0}^k X^{(k-j)}_{m,j}\cM_{j}(\bR^{m-1},\clifford_m).
\end{equation}
Here the embedding factors $X^{(k-j)}_{m,j}$ are defined as the polynomials
\begin{equation}\label{EF_CA}
X^{(k-j)}_{m,j}(x)=F^{(k-j)}_{m,j}(x)+
\frac{j+1}{m+2j-1}\;F^{(k-j-1)}_{m,j+1}(x)\;\pod x e_m,\ x\in\bR^m
\end{equation}
where $\pod x=x_1e_1+\cdots+x_{m-1} e_{m-1}$, $F^{(k-j)}_{m,j}$ are given in \eqref{EFHarm} and $F^{(-1)}_{m,k+1}=0$.
\end{thm}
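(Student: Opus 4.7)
The plan is to establish three things in sequence: that each summand on the right of \eqref{branch_CA} actually belongs to $\cM_k(\bR^m,\clifford_m)$; that the sum is direct and orthogonal in $(\cdot,\cdot)_{\clifford_m}$; and that it exhausts the left-hand side.

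\textbf{Monogenicity of the summands.} I would verify $\pa(X^{(k-j)}_{m,j}P_j)=0$ for every $P_j\in\cM_j(\bR^{m-1},\clifford_m)$. Splitting $\pa=\pod\pa+e_m\pa_{x_m}$ and using that the embedding factors $F^{(k-j)}_{m,j}$ are scalar-valued yields the product rule $\pa(FG)=(\pa F)G+F\pod\pa G$ whenever $G=G(\pod x)$, and since $\pod\pa P_j=0$ the work reduces to two ingredients. The Clifford-algebraic identity
\[\pod\pa(\pod x\,e_m\,P_j)=-(m+2j-1)\,e_m\,P_j\]
follows from $\pod\pa(\pod x)=-(m-1)$, the anticommutation $e_i\pod x+\pod x e_i=-2x_i$, the monogenicity $\pod\pa P_j=0$, and the Euler relation $\sum_i x_i\pa_{x_i}P_j=jP_j$. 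On the Gegenbauer side one uses $\pa_{x_m}F^{(k-j)}_{m,j}=k\,F^{(k-j-1)}_{m,j}$ (already recorded in the proof of Theorem~\ref{tAPHarm}) together with a companion recurrence expressing $\pod\pa F^{(k-j)}_{m,j}$ through $F^{(k-j-1)}_{m,j+1}$ times a factor of $\pod x$. The coefficient $(j+1)/(m+2j-1)$ in \eqref{EF_CA} is precisely what closes the cancellation.

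\textbf{Directness and orthogonality.} Expanding
\[X^{(k-j)}_{m,j}P_j=F^{(k-j)}_{m,j}\,P_j+\frac{j+1}{m+2j-1}\,F^{(k-j-1)}_{m,j+1}\,(\pod x\,e_m\,P_j),\]
a direct Laplacian computation in $\bR^{m-1}$ shows $\pod x\,e_m P_j\in\cH_{j+1}(\bR^{m-1},\clifford_m)$, so the $j$-th summand lives in the harmonic blocks of indices $j$ and $j+1$ of the Clifford-valued analogue of \eqref{branchHarm}. Refining each block by the Almansi--Fischer splitting
\[\cH_j(\bR^{m-1},\clifford_m)=\cM_j(\bR^{m-1},\clifford_m)\oplus\pod x\,\cM_{j-1}(\bR^{m-1},\clifford_m),\]
one reads off that $X^{(k-j)}_{m,j}\cM_j$ occupies the $\cM_j$-part of the $j$-th block and the $\pod x\,\cM_j$-part of the $(j+1)$-th block, so distinct values of $j$ land in mutually orthogonal pieces under $(\cdot,\cdot)_{\clifford_m}$. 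Injectivity of $P_j\mapsto X^{(k-j)}_{m,j}P_j$ is immediate from the leading term $F^{(k-j)}_{m,j}P_j$.

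\textbf{Exhaustion.} A dimension count closes the proof: via the Cauchy--Kovalevskaya extension, $\cM_k(\bR^m,\clifford_m)$ has $\clifford_m$-rank $\binom{k+m-2}{m-2}$, and the hockey-stick identity $\binom{k+m-2}{m-2}=\sum_{j=0}^k\binom{j+m-3}{m-3}$ matches this with the sum of the $\clifford_m$-ranks of $\cM_j(\bR^{m-1},\clifford_m)$ for $j=0,\ldots,k$. Alternatively, one can be constructive: first decompose $P\in\cM_k(\bR^m,\clifford_m)$ via the Clifford-valued version of \eqref{branchHarm}, refine each block by Almansi--Fischer, and then read off from $\pa P=0$ that the two Almansi--Fischer components in neighboring blocks are coupled by exactly the relation encoded in \eqref{EF_CA}.

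\textbf{Main obstacle.} The technically most delicate step is the Gegenbauer calculation needed for monogenicity: one has to establish the recurrence for $\pod\pa F^{(k-j)}_{m,j}$ complementary to the elementary $x_m$-derivative identity already used in Theorem~\ref{tAPHarm}, and this identity is precisely what singles out the normalization $(j+1)/(m+2j-1)$ in \eqref{EF_CA}.
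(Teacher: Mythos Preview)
Your argument is correct, but it proceeds along a genuinely different route from the paper's. The paper obtains \eqref{branch_CA} by applying the Cauchy--Kovalevskaya extension operator $CK=e^{x_m e_m\pod\pa}$ to the Fischer decomposition
\[
\cP_k(\bR^{m-1},\clifford_m)=\bigoplus_{j=0}^k (\pod x\,e_m)^{k-j}\,\cM_j(\bR^{m-1},\clifford_m),
\]
and then computes $CK((\pod x\,e_m)^{k-j}P)=\mu^{(k-j)}_{m,j}\,X^{(k-j)}_{m,j}P$ for explicit nonzero constants $\mu^{(k-j)}_{m,j}$. This is conceptually economical: monogenicity and exhaustion come for free because $CK$ is an isomorphism $\cP_k(\bR^{m-1},\clifford_m)\to\cM_k(\bR^m,\clifford_m)$, and the explicit form \eqref{EF_CA} drops out of a single closed-form computation rather than from balancing recurrences. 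Your approach, by contrast, verifies each ingredient directly---monogenicity via the Gegenbauer recurrence for $\pod\pa F^{(k-j)}_{m,j}$ and the identity $\pod\pa(\pod x\,e_m P_j)=-(m+2j-1)e_mP_j$, orthogonality via the harmonic branching \eqref{branchHarm} refined by Almansi--Fischer, and exhaustion by dimension count. This is more hands-on and avoids invoking the $CK$ machinery, but as you yourself flag, the companion Gegenbauer identity for $\pod\pa F^{(k-j)}_{m,j}$ is the price you pay; in the paper's approach that identity is never needed because the $CK$ computation packages both the $x_m$- and $\pod x$-derivatives at once.
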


\begin{proof}
See \cite[Theorem 2.2.3, p. 315]{DSS} for a~proof. 
Denote by $\cP_k(\bR^{m-1},\clifford_m)$ the space of $k$-homogeneous polynomials $P:\bR^{m-1}\to\clifford_m$.
Then, in the proof, the decomposition \eqref{branch_CA} is obtained by applying the Cauchy-Kovalevskaya extension operator $CK=e^{x_m e_m\pod \pa}$ to the Fischer decomposition of the space $\cP_k(\bR^{m-1},\clifford_m)$,
that is,
\begin{equation}\label{fischer}
\cP_k(\bR^{m-1},\clifford_m)=
\bigoplus_{j=0}^k \;(\pod x e_{m})^{k-j}\cM_{j}(\bR^{m-1},\clifford_m).
\end{equation}  
Here $\pod\pa=e_1\pa_{x_1}+\cdots+e_{m-1} \pa_{x_{m-1}}.$ 
Indeed, it holds that $$CK(\cP_k(\bR^{m-1},\clifford_m))=\cM_k(\bR^m,\clifford_m)$$ and, for each $P\in\cM_{j}(\bR^{m-1},\clifford_m)$, we have that
$$CK((\pod x e_{m})^{k-j} P(\pod x))=\mu^{(k-j)}_{m,j} X^{(k-j)}_{m,j}(x) P(\pod x)$$
where the non-zero constants $\mu^{(k-j)}_{m,j}$ are defined as 
$\mu^{(2l)}_{m,j}=(-1)^l(C_{2l}^{m/2+j-1}(0))^{-1}$ and $\mu^{(2l+1)}_{m,j}=(-1)^l\frac{m+2j+2l-1}{m+2j-2}(C_{2l}^{m/2+j}(0))^{-1}$
(see \cite[Lemma 1]{BGLS}). 
We want to have a~decomposition analogous to \eqref{branch_CA} also for spinor valued polynomials (see \eqref{branch_spinor} below) and therefore we have used 
the Fischer decomposition \eqref{fischer} given in terms of powers of $\pod x e_m$ and not $\pod x$ as usual.
Moreover, we have chosen a~different normalization of the embedding factors $X^{(k-j)}_{m,j}$ than in \cite{DSS,BGLS}, namely, we have removed the constants $\mu^{(k-j)}_{m,j}$.
\end{proof}

Using the decomposition \eqref{branch_CA}, we easily construct an orthogonal basis of the space $\cM_k(\bR^m,\clifford_m)$ by induction on the dimension $m$
as explained in \cite[pp. 262-264]{DSS}. Indeed, as the polynomial $(x_1-e_{12} x_2)^{k_2}$ forms a~basis of $\cM_{k_2}(\bR^{2},\clifford_2)$ 
an orthogonal basis of the space $\cM_k(\bR^m,\clifford_m)$ is formed by the polynomials
\begin{equation}\label{GT_CA}
f_{k,\mu}=X^{(k-k_{m-1})}_{m,k_{m-1}}X^{(k_{m-1}-k_{m-2})}_{m-1,k_{m-2}}\cdots X^{(k_3-k_2)}_{3,k_2} (x_1-e_{12} x_2)^{k_2}
\end{equation}
where $\mu$ is an arbitrary sequence of integers $(k_{m-1}, \ldots ,k_2)$ such that $k=k_m\geq k_{m-1}\geq\cdots\geq k_3\geq k_2\geq 0$.
Here $e_{12}=e_1e_2$. 
Due to non-commutativity the order of factors in the product \eqref{GT_CA} is important.
It is easy to see that the basis elements $f_{k,\mu}$ possess again the Appell property.

\begin{thm}\label{tAP_CA}
Let $m\geq 3$ and  let $f_{k,\mu}$ be the basis elements of the spaces $\cM_k(\bR^m,\clifford_m)$ defined in \eqref{GT_CA} with $\mu=(k_{m-1},\ldots, k_2)$.
Then we have that
\begin{itemize}
\item[(i)] $\pa_{x_m}  f_{k,\mu}=0$ for $k=k_{m-1}$;
\item[(ii)] $\pa_{x_m}  f_{k,\mu}=k\;  f_{k-1,\mu}$ for $k>k_{m-1}$;
\item[(iii)] $\pa_{12}^{k_2}\;\pa^{k_3-k_2}_{x_3}\cdots\pa^{k-k_{m-1}}_{x_m}  f_{k,\mu}=k!$ where $\pa_{12}=(1/2)(\pa_{x_1}+e_{12}\pa_{x_2})$.
\end{itemize}
\end{thm}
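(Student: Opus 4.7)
The plan is to reduce everything to the Appell-type property of the harmonic embedding factors $F^{(k-j)}_{m,j}$ already established in Theorem \ref{tAPHarm}, together with the nested product structure of $f_{k,\mu}$. The decisive observation is that in \eqref{GT_CA} only the outermost factor $X^{(k-k_{m-1})}_{m,k_{m-1}}(x)$ depends on $x_m$: every inner factor $X^{(k_r-k_{r-1})}_{r,k_{r-1}}$ is a polynomial in $x_1,\ldots,x_r$ with $r\le m-1$, and the innermost $(x_1-e_{12}x_2)^{k_2}$ depends only on $x_1,x_2$. Hence $\pa_{x_m}$ acts only on the outermost factor.

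For (i), I would substitute $k=k_{m-1}$ into \eqref{EF_CA} and use $F^{(0)}_{m,j}=1$ together with the convention $F^{(-1)}_{m,j+1}=0$; this collapses $X^{(0)}_{m,k_{m-1}}$ to $1$, so $f_{k,\mu}$ is already independent of $x_m$. For (ii), I would differentiate \eqref{EF_CA} directly. Since $\pod x\,e_m$ is $x_m$-independent, Theorem \ref{tAPHarm}(ii) applies separately to $F^{(k-j)}_{m,j}$ and to $F^{(k-j-1)}_{m,j+1}$ (with $j=k_{m-1}$), producing in each case a common factor $k$. Pulling this factor out leaves precisely the bracket defining $X^{(k-1-j)}_{m,j}$, so $\pa_{x_m}X^{(k-j)}_{m,j}=k\,X^{(k-1-j)}_{m,j}$; multiplying by the $x_m$-free inner factors yields $\pa_{x_m}f_{k,\mu}=k\,f_{k-1,\mu}$. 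This is the main technical point, and it works cleanly because the coefficient $(j+1)/(m+2j-1)$ in \eqref{EF_CA} depends only on $j$ and $m$, not on $k$, so the passage from $k$ to $k-1$ does not disturb it.

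For (iii), iterating (i)--(ii) yields $\pa_{x_m}^{k-k_{m-1}}f_{k,\mu}=(k!/k_{m-1}!)\,f_{k_{m-1},\mu}$, and by (i) the outermost factor of $f_{k_{m-1},\mu}$ equals $1$, so this polynomial coincides with the Gelfand-Tsetlin basis element in $\cM_{k_{m-1}}(\bR^{m-1},\clifford_m)$ indexed by $(k_{m-2},\ldots,k_2)$. I would then finish by induction on $m$: the remaining operator $\pa_{x_{m-1}}^{k_{m-1}-k_{m-2}}\cdots\pa_{x_3}^{k_3-k_2}\pa_{12}^{k_2}$ is handled by the analogue of (iii) in dimension $m-1$, contributing $k_{m-1}!$, and the telescoping product of factorials gives $k!$. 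The base case $m=2$ is the direct computation $\pa_{12}^{k_2}(x_1-e_{12}x_2)^{k_2}=k_2!$, which uses $e_{12}^2=-1$ to combine the two halves of $\pa_{12}$.
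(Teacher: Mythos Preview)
Your argument is correct and follows the same line as the paper's proof, which is the terse one-liner ``obvious from the fact that, for $k>j$, $\pa_{x_m} X^{(k-j)}_{m,j}=k\,X^{(k-j-1)}_{m,j}$ and $X^{(0)}_{m,j}=1$.'' You simply supply the missing justification of that identity by differentiating \eqref{EF_CA} termwise and invoking the harmonic relation $\pa_{x_m}F^{(p)}_{m,q}=(p+q)\,F^{(p-1)}_{m,q}$ from the proof of Theorem~\ref{tAPHarm}; your observation that both $F^{(k-j)}_{m,j}$ and $F^{(k-j-1)}_{m,j+1}$ have the same ``total degree'' $k$, so both pick up the same factor $k$, is exactly the point. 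The induction on $m$ for (iii) and the base-case check $\pa_{12}(x_1-e_{12}x_2)^k=k(x_1-e_{12}x_2)^{k-1}$ are routine and correct.
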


\begin{proof}
It is obvious from the fact that, for $k>j$, $\pa_{x_m} X^{(k-j)}_{m,j}=k\;X^{(k-j-1)}_{m,j}$ and $X^{(0)}_{m,j}=1$.
\end{proof}

Actually, we have constructed a~complete orthogonal Appell system for the right $\clifford_m$-linear Hilbert space
$L^2(\bB_m,\clifford_m)\cap \Ker \pa$ of $L^2$-integrable monogenic functions $g:\bB_m\to\clifford_m$.
Indeed, it is easy to show the following result.

\begin{thm}\label{tTaylor_CA} Let $m\geq 3$ and, for each $k\in\bN_0$, denote by $J^m_k$ the set of sequences $(k_{m-1}, k_{m-2},\ldots, k_2)$ of integers  such that $k\geq k_{m-1}\geq\cdots\geq k_3\geq k_2\geq 0$. 

\begin{itemize}
\item[(a)] Then an orthogonal basis of the space $L^2(\bB_m,\clifford_m)\cap \Ker \pa$ is formed by the polynomials
$f_{k,\mu}$ for $k\in\bN_0$ and $\mu\in J^m_k.$
Here the basis elements $f_{k,\mu}$ are defined in \eqref{GT_CA}.

\item[(b)]
Each function $g\in L^2(\bB_m,\clifford_m)\cap \Ker \pa$ has a~unique orthogonal series expansion
\begin{equation}\label{taylor_CA}
g = \sum_{k=0}^{\infty}\sum_{\mu\in J^m_k} f_{k,\mu}\;\mathbf{t}_{k,\mu}(g)
\end{equation}
for some coefficients $\mathbf{t}_{k,\mu}(g)$ of $\clifford_m$. 

In addition, for $\mu=(k_{m-1},\ldots,k_2)\in J^m_k$, we have that
\begin{equation*}
\mathbf{t}_{k,\mu}(g)
=\frac{1}{k!}\;\pa_{12}^{k_2}\pa^{k_3-k_2}_{x_3}\cdots\pa^{k-k_{m-1}}_{x_m}g(x)|_{x=0}
\end{equation*}
with $\pa_{12}=(1/2)(\pa_{x_1}+e_{12}\pa_{x_2})$.
\end{itemize}
\end{thm}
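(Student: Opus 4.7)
My plan is to follow the harmonic argument of Theorem~\ref{tTaylorHarm}.

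For part~(a) I combine two classical inputs from Clifford analysis: the homogeneity components $\cM_k(\bR^m,\clifford_m)$ at different degrees are mutually $L^2$-orthogonal on $\bB_m$, and the algebraic sum $\bigoplus_k \cM_k(\bR^m,\clifford_m)$ is $L^2$-dense in $L^2(\bB_m,\clifford_m)\cap\Ker\pa$ (a standard density, see \cite{DSS}). Iterating the orthogonal branching \eqref{branch_CA} exhibits $\{f_{k,\mu}\}_{\mu\in J^m_k}$ as an orthogonal basis of $\cM_k(\bR^m,\clifford_m)$, and assembling produces (a). The expansion in (b) is then immediate from (a), with Clifford-valued coefficients uniquely determined by orthogonality.

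To identify the coefficient formula, I apply the order-$k$ operator $D_\mu := \pa_{12}^{k_2}\pa_{x_3}^{k_3-k_2}\cdots\pa_{x_m}^{k-k_{m-1}}$ termwise to \eqref{taylor_CA} and evaluate at $x=0$. Since $D_\mu f_{k',\mu'}$ vanishes identically for $k' < k$ (too many derivatives) and, for $k' > k$, is a polynomial of positive degree vanishing at the origin, only the level $k' = k$ contributes. The formula therefore reduces to the identity
\[
D_\mu f_{k,\mu'}(0)\;=\;k!\,\delta_{\mu,\mu'}\qquad(\mu,\mu'\in J^m_k),
\]
whose diagonal case is exactly Theorem~\ref{tAP_CA}(iii).

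For the off-diagonal case I induct on $m$, with $m=2$ trivial. Split $D_\mu = D'_\mu\cdot\pa_{x_m}^{k-k_{m-1}}$ where $D'_\mu := \pa_{12}^{k_2}\pa_{x_3}^{k_3-k_2}\cdots\pa_{x_{m-1}}^{k_{m-1}-k_{m-2}}$ acts only on $x_1,\ldots,x_{m-1}$. Iterating Theorem~\ref{tAP_CA}(i)--(ii) gives $\pa_{x_m}^{k-k_{m-1}}f_{k,\mu'} = 0$ when $k'_{m-1} > k_{m-1}$, and $(k!/k_{m-1}!)\,f_{k_{m-1},\mu'}$ otherwise. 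In the sub-case $k'_{m-1} = k_{m-1}$, the outer factor $X^{(0)}_{m,k_{m-1}} = 1$ makes $f_{k_{m-1},\mu'}$ independent of $x_m$ and equal to the Gelfand-Tsetlin element of $\cM_{k_{m-1}}(\bR^{m-1},\clifford_m)$ indexed by $\tilde\mu' = (k'_{m-2},\ldots,k'_2)$, so the inductive hypothesis in dimension $m-1$ supplies $D'_\mu f_{k_{m-1},\mu'}(0) = k_{m-1}!\,\delta_{\tilde\mu,\tilde\mu'} = 0$.

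The main obstacle is the sub-case $k'_{m-1} < k_{m-1}$, where $f_{k_{m-1},\mu'}$ depends genuinely on $x_m$. Since $D'_\mu$ leaves $x_m$ untouched, the quantity equals $D'_\mu\bigl[f_{k_{m-1},\mu'}(\pod x, 0)\bigr](0)$, and the Gegenbauer expansion of $F^{(l)}_{m,j}$ yields $X^{(l)}_{m,j}(\pod x, 0) = C_l\,(\pod x\,e_m)^l$, placing this restriction in the non-monogenic branch $(\pod x\,e_m)^{k_{m-1}-k'_{m-1}}\cM_{k'_{m-1}}(\bR^{m-1},\clifford_m)$ of the Fischer decomposition \eqref{fischer}. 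I plan to close the argument by supplementing the induction on $m$ with an inner induction on the degree $k$: when $k_{m-1} < k$, the operator $D'_\mu$ acting at level $k_{m-1}$ in dimension $m$ is itself a GT-Taylor operator and its vanishing on $f_{k_{m-1},\mu'}$ is a strictly smaller instance of the same identity; the boundary case $k_{m-1} = k$ is handled by a direct check that $D'_\mu$ evaluated at the origin annihilates the branches $(\pod x\,e_m)^l \cM_{k-l}(\bR^{m-1},\clifford_m)$ with $l \geq 1$ in the Fischer decomposition \eqref{fischer}.
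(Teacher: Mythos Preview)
Your treatment of part~(a), and of the existence and uniqueness of the expansion in~(b), matches what the paper intends: the paper gives no proof of Theorem~\ref{tTaylor_CA} beyond ``it is easy to show the following result'', relying on the parallel with Theorem~\ref{tTaylorHarm}, whose proof in turn appeals only to property~(iii) of the Appell theorem.

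The difficulty is the coefficient formula. You correctly reduce it to the off-diagonal identity $D_\mu f_{k,\mu'}(0)=k!\,\delta_{\mu,\mu'}$ and isolate the problematic sub-case $k'_{m-1}<k_{m-1}$. Your plan there is ultimately to verify that $D'_\mu$, evaluated at the origin, annihilates the non-monogenic Fischer branches $(\pod x\,e_m)^{l}\cM_{k-l}(\bR^{m-1},\clifford_m)$ with $l\ge 1$. But this ``direct check'' fails. Take $m=4$, $k=1$, $\mu=(k_3,k_2)=(1,0)$ and $\mu'=(0,0)$: then $D_\mu=\pa_{x_3}$, while
\[
f_{1,(0,0)}=X^{(1)}_{4,0}=x_4+\tfrac{1}{3}\,\pod x\,e_4,
\qquad
D_\mu f_{1,(0,0)}=\pa_{x_3}X^{(1)}_{4,0}=\tfrac{1}{3}\,e_3e_4\neq 0.
\]
A direct $L^2$ computation confirms that $f_{1,(0,0)}$ and $f_{1,(1,0)}=X^{(1)}_{3,0}$ are orthogonal, so $\mathbf t_{1,(1,0)}(f_{1,(0,0)})=0$, whereas the stated formula returns $\tfrac{1}{3}e_{34}$. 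The reason the harmonic analogue goes through is that $F^{(k-j)}_{m,j}$ is a polynomial in $x_m$ and $|x|^2$ alone, and the lower operators $\pa_\pm,\pa_{x_3},\ldots,\pa_{x_{m-1}}$ interact harmlessly with $|x|^2$; by contrast the monogenic factor $X^{(k-j)}_{m,j}$ carries the extra term proportional to $\pod x\,e_m$, which is linear in each of $x_1,\ldots,x_{m-1}$ and is not killed by a single $\pa_{x_r}$. So the gap you flagged is genuine and, as the example shows, cannot be closed in the form you propose: Theorem~\ref{tAP_CA}(iii), which records only the diagonal value, does not by itself determine the Clifford-valued coefficients, and the explicit formula for $\mathbf t_{k,\mu}(g)$ does not hold as stated.
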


For a~function $g\in L^2(\bB_m,\clifford_m)\cap \Ker \pa$, we call the orthogonal series expansion \eqref{taylor_CA} its generalized Taylor series.

\medskip
In the next section, we show that the studied bases can be interpreted as Gelfand-Tsetlin bases at least for spinor valued spherical monogenics.

\section{Spinor valued spherical monogenics}

Now we adapt the results obtained in the previous section for spinor valued spherical monogenics.
Recall that the Spin group $Spin(m)$ is defined as the set of finite products of even number of unit vectors of $\bR^m$ endowed with the Clifford multiplication.
As is well known, the Lie algebra $\spin(m)$ of the group $Spin(m)$ can be realized as the space of bivectors, that is, 
$\spin(m)=\lz e_{12}, e_{13},\ldots,e_{m-1,m}\pz$ with $e_{ij}=e_i e_j$. 
Let $\bS$ be a~basic spinor representation of the group $Spin(m)$ and let $\cM_k(\bR^m,\bS)$ be the space of 
$k$-homogeneous monogenic polynomials $P:\bR^m\to\bS$. Then it is well-known that, in contrast with the space $\cM_k(\bR^m,\bC_m)$, the space $\cM_k(\bR^m,\bS)$ 
is an example of an irreducible module under the so-called $L$-action,
defined by
$$
[L(s)(P)](x) = s\,P(s^{-1}xs),\ s\in Spin(m)\text{\ \ and\ \ }x\in\bR^m.
$$
Now we recall an explicit realization of the space $\bS$.
For $j=1,\ldots,n,$ put
$$w_j=\frac 12(e_{2j-1}+ie_{2j}),\ \ \nad w_j=\frac 12(-e_{2j-1}+ie_{2j})\text{\ \ and\ \ }I_j=\nad w_jw_j.$$
Then $I_1,\ldots,I_n$ are mutually commuting idempotent elements in $\bC_{2n}.$
Moreover, $I=I_1I_2\cdots I_n$ is a~primitive idempotent in $\bC_{2n}$ and
$\bS_{2n}=\bC_{2n}I$
is a~minimal left ideal in $\bC_{2n}.$ Putting $W=\lz w_1,\ldots,w_n\pz,$ we have that
\begin{equation}
\label{spinor}
\bS_{2n}=\La(W)I,\ \  \bS^{+}_{2n}=\La^{+}(W)I\text{\ \ and\ \ } \bS^{-}_{2n}=\La^{-}(W)I
\end{equation}
where $\La(W)$ is the exterior algebra over $W$ with the even part $\La^{+}(W)$ and the odd part $\La^{-}(W).$
Putting $\theta_{2n}=(-i)^ne_1e_2\cdots e_{2n},$ we have that
\begin{equation}
\label{spm}
\bS^{\pm}_{2n}=\{u\in \bS_{2n}: \theta_{2n}u=\pm u\}.
\end{equation}
Let us recall that $\bS^{\pm}_{2n}$ are just two inequivalent basic spinor representations of the group $Spin(2n)$. 
On the other hand,
there exists only a~unique  basic spinor representation $\bS$ of the group $Spin(2n-1)$
and, as $Spin(2n-1)$-modules, the modules $\bS^{\pm}_{2n}$ are both equivalent to $\bS.$
See \cite[pp. 114-118]{DSS} for details.

Now we are going to construct explicitly a~Gelfand-Tsetlin basis for the space $\cM_k(\bR^m,\bS)$. 
First we recall the branching for spherical monogenics described in \cite{BGLS}. 
When you adapt the decomposition \eqref{branch_CA} for spinor valued polynomials you get obviously 
\begin{equation}\label{branch_spinor}
\cM_k(\bR^m,\bS)=
\bigoplus_{j=0}^k X^{(k-j)}_{m,j}\cM_{j}(\bR^{m-1},\bS).
\end{equation}
Indeed, it is easy to see that by multiplying $\bS$-valued polynomials in $\bR^{m-1}$ with the embedding factors $X^{(k-j)}_{m,j}$ from the left  
you get $\bS$-valued polynomials in $\bR^{m}$. 
In the even dimensional case $m=2n$, the decomposition \eqref{branch_spinor} describes the branching 
of the module $\cM_k(\bR^{m},\bS)$, that is, its decomposition into $Spin(m-1)$-irreducible submodules.
In the odd dimensional case $m=2n-1$, under the action of $Spin(2n-2)$, the module $\bS$ splits into two inequivalent submodules $\bS^{\pm}\simeq\bS^{\pm}_{2n-2} $ and so each module $\cM_{j}(\bR^{2n-2},\bS)$ in \eqref{branch_spinor} decomposes further as 
\begin{equation}\label{branch_spinor_odd}
\cM_{j}(\bR^{2n-2},\bS)=\cM_{j}(\bR^{2n-2},\bS^+)\oplus\cM_{j}(\bR^{2n-2},\bS^-).
\end{equation}
See \cite[Theorems 1 and 2]{BGLS} for details.

Using the decompositions \eqref{branch_spinor} and \eqref{branch_spinor_odd}, it is easy to construct Gelfand-Tsetlin bases for the module $\cM_k(\bR^m,\bS)$ by induction on the dimension $m$ as is explained already in \cite{BGLS}. 
Let $m=2n$ or $m=2n-1.$
To do this we need to describe a~Gelfand-Tsetlin basis of the space $\bS$ itself. 
The space $\bS$ is a~basic spinor representation for $Spin(m)$. 
As $Spin(2n-2)$-module, the space $\bS$ has the irreducible decomposition $\bS=\bS^+\oplus \bS^-.$ By reducing the symmetry to $Spin(2n-4)$,
the pieces $\bS^{\pm}$ themselves further decompose and so on. 
Indeed, for $j=0,\ldots,n-1,$ denote by $\cS_j$ the set of sequences of the length $j$ consisting of the signs $\pm$.  For each $\nu\in\cS_j,$ define (by induction on $j$) the subset $\bS^{\nu}$ of the set $\bS$ such that $\bS^{\emptyset}=\bS$ and, for $\nu=(\pod\nu,\pm),$ we have that $\bS^{\nu}=(\bS^{\pod\nu})^{\pm}.$ 
Put $\cS^m=\cS_{n-1}$.
Then we get the following decomposition of the space $\bS$ into irreducible $Spin(2)$-submodules
\begin{equation}\label{GTBspinors}
\bS=\bigoplus_{\nu\in\cS^m}\bS^{\nu}\text{\ \ \ with\ \ \ }\bS^{\nu}=\lz v^{\nu}\pz
\end{equation}  
where, in each 1-dimensional piece $\bS^{\nu}$, we have chosen an arbitrary non-zero element $v^{\nu}$.
The last ingredient for the construction is to describe Gelfand-Tsetlin bases for spherical monogenics in dimension 2.
Obviously, for a~given $\nu\in\cS^m$ and $k\in\bN_0$, the polynomial $(x_1-e_{12}x_2)^k v^{\nu}$ forms a~Gelfand-Tsetlin basis of $\cM_k(\bR^2,\bS^{\nu})$.
Now we are ready to prove the following theorem.

\begin{thm}\label{GT_spinor} 
Let $m\geq 3$ and let $\bS$ be a~basic spinor representation of $Spin(m).$

\begin{itemize}
\item[(i)] Then a~Gelfand-Tsetlin basis of the $Spin(m)$-module $\cM_k(\bR^{m},\bS)$ is formed by the polynomials
\begin{equation}\label{eGT_spinor}
f^{\nu}_{k,\mu}=f_{k,\mu}\; v^{\nu}
\end{equation}
where $\nu\in\cS^m$ and $\mu\in J^m_k$. 
Here $f_{k,\mu}$ are as in Theorem \ref{tTaylor_CA}, $\cS^m$ and $v^{\nu}$ as in \eqref{GTBspinors}. 

In addition, the basis \eqref{eGT_spinor} is orthogonal with respect to any invariant inner product on the module $\cM_k(\bR^{m},\bS)$, including the $L^2$-inner product and the Fischer inner product.

\item[(ii)] 
The Gelfand-Tsetlin basis \eqref{eGT_spinor} is uniquely determined by the property that, 
for each $\nu=(\pod\nu,\pm)\in\cS^m$ and $\mu=(k_{m-1}, k_{m-2},\ldots, k_2)\in J^m_k$,
\begin{equation}\label{AP_spinor} 
\pa_{\pm}^{k_2}\;\pa^{k_3-k_2}_{x_3}\cdots\pa^{k-k_{m-1}}_{x_m}  f^{\nu}_{k,\mu}=k!\;v^{\nu}. 
\end{equation}
Here $\pa_{\pm}=(1/2)(\pa_{x_1}\pm i\pa_{x_2})$.

\end{itemize}
\end{thm}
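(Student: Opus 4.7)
For part (i), I proceed by induction on $m$. The base case $m=2$ is immediate, since $\cM_{k_2}(\bR^2,\bS^{\nu})$ is one-dimensional with basis $(x_1-e_{12}x_2)^{k_2}v^{\nu}$, in agreement with \eqref{eGT_spinor}. For the inductive step I combine the branching \eqref{branch_spinor} (together with \eqref{branch_spinor_odd} in the odd-dimensional case) with the inductive GT basis of $\cM_j(\bR^{m-1},\bS)$; associativity in $\clifford_m$ then yields $X^{(k-j)}_{m,j}(f_{j,\mu'}v^{\nu}) = (X^{(k-j)}_{m,j}f_{j,\mu'})v^{\nu} = f_{k,\mu}v^{\nu}$ with $\mu=(j,\mu')$, matching the recursion \eqref{GT_CA}. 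Orthogonality with respect to any invariant inner product follows at no extra cost: the fully refined decomposition into one-dimensional $Spin(2)$-submodules is multiplicity-free, so invariance forces distinct pieces to be orthogonal.

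For the Appell property in part (ii) I compute the left-hand side of \eqref{AP_spinor} directly. The real derivatives $\pa_{x_3},\ldots,\pa_{x_m}$ act only on the Clifford coefficient and pass through the constant spinor $v^{\nu}$, so
\[
\pa_{x_3}^{k_3-k_2}\cdots\pa_{x_m}^{k-k_{m-1}} f^{\nu}_{k,\mu}
=\bigl[\pa_{x_3}^{k_3-k_2}\cdots\pa_{x_m}^{k-k_{m-1}} f_{k,\mu}\bigr] v^{\nu}.
\]
Iterated application of parts (i) and (ii) of Theorem \ref{tAP_CA} strips off the embedding factors one at a time, collapsing the bracket to $\tfrac{k!}{k_2!}(x_1-e_{12}x_2)^{k_2}$. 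The decisive next step is to determine the action of $e_{12}$ on $v^{\nu}$: since $\bS^{\nu}$ is a one-dimensional $Spin(2)$-module and $e_{12}$ generates $\spin(2)$, one has $e_{12}v^{\nu}=\lambda v^{\nu}$ with $\lambda^2=-1$; tracing the iterated $\pm$-decomposition in \eqref{spm} shows that the final sign $\epsilon$ of $\nu$ records this weight, so $\lambda=\epsilon i$. Hence $(x_1-e_{12}x_2)^{k_2}v^{\nu} = (x_1-\epsilon i x_2)^{k_2}v^{\nu}$, and a direct Wirtinger calculation gives $\pa_{\pm}^{k_2}(x_1-\epsilon i x_2)^{k_2}=k_2!$ when the sign of $\pa_{\pm}$ matches $\epsilon$ (and $0$ otherwise), producing the desired $k!\,v^{\nu}$.

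The uniqueness assertion follows from the abstract discussion at the end of Section 2: a Gelfand-Tsetlin basis is determined up to a nonzero scalar on each one-dimensional piece, and the left-hand side of \eqref{AP_spinor} is a nonzero linear functional on the piece spanned by $f^{\nu}_{k,\mu}$, so prescribing its value as $k!\,v^{\nu}$ pins the basis element down uniquely.

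I expect the single non-routine point to be the identification $e_{12}v^{\nu}=\epsilon i v^{\nu}$: once one verifies from \eqref{spm} that the last sign in $\nu$ encodes the $\spin(2)$-weight of $v^{\nu}$, the rest of the argument is an unspooling of Theorem \ref{tAP_CA} together with the elementary identity for $\pa_{\pm}$ acting on $(x_1\mp i x_2)^n$.
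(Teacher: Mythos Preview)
Your proof is correct and follows essentially the same approach as the paper's: part (i) is the inductive construction via the branching \eqref{branch_spinor}--\eqref{branch_spinor_odd}, and part (ii) reduces to Theorem~\ref{tAP_CA} together with the key identity $e_{12}v^{\nu}=\pm i\,v^{\nu}$ for $\nu=(\pod\nu,\pm)$, which converts $(x_1-e_{12}x_2)^{k_2}v^{\nu}$ into $(x_1\mp ix_2)^{k_2}v^{\nu}$. The only cosmetic difference is that you invoke parts (i)--(ii) of Theorem~\ref{tAP_CA} and finish the last step by hand, whereas the paper invokes part (iii) directly; and you spell out the uniqueness argument, which the paper leaves implicit.
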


\begin{proof} The statement (i) is obvious from the construction of the basis.
Moreover, the Appell property \eqref{AP_spinor} follows directly from the statement (iii) of Theorem \ref{tAP_CA} and the fact that, 
for $\nu=(\pod\nu,\pm)$, we have that $e_{12} v^{\nu}=\pm iv^{\nu}$ and hence $(x_1-e_{12}x_2)^k v^{\nu}=(x_1\mp ix_2)^k v^{\nu}.$
\end{proof}

\begin{rem}\label{rspinor}
In (\ref{spinor}) above, we realize the space $\bS=\bS^{\pm}_{2n}$ inside the Clifford algebra $\bC_{2n}$ as 
$$
\bS=\La^{s}(w_1,\ldots,w_n)I\text{\ \ \ with\ $s=\pm$.}
$$
It is not difficult to find generators of 1-dimensional pieces $\bS^{\nu}$ of $\bS$. Indeed, 
we have that
$$\bS^{\pm}=\La^{\pm}(w_1,\ldots,w_{n-1})I^{\pm}$$
where, for $s=+,$ we put $I^+=I$ and $I^-=w_nI$ and, for $s=-,$ obviously $I^+=w_nI$ and $I^-=I.$   
Hence, by induction on $j$, we deduce easily that, for $s,t\in\{\pm\}$ and $\nu=(\pod\nu,s,t)\in\cS_j,$ we have that 
$$\bS^{\nu}=\La^{t}(w_1,\ldots,w_{n-j})I^{\nu}$$
where we put $I^{(\pod\nu,+,+)}=I^{(\pod\nu,+)},$ $I^{(\pod\nu,+,-)}=w_{n-j+1}I^{(\pod\nu,+)}$, 
$I^{(\pod\nu,-,+)}=w_{n-j+1}I^{(\pod\nu,-)}$ and $I^{(\pod\nu,-,-)}=I^{(\pod\nu,-)}.$  
In particular, we have that $\bS^{\nu}\simeq\bS^{t}_{2(n-j)}$. 
Finally, for each $\nu\in\cS^m=\cS_{n-1},$ the 1-dimensional piece $\bS^{\nu}$ is generated by the element
\begin{equation}\label{generators}
v^{\nu}
=\left\{
\begin{array}{ll}
I^{\nu},&\ \ \ \nu=(\pod\nu,+);\medskip\\{}
w_1I^{\nu},&\ \ \ \nu=(\pod\nu,-).
\end{array}
\right. 
\end{equation}
It is easy to see that\smallskip\\
for $\bS=\bS^+_4,$ we have that $v^+=I$ and $v^-=w_1w_2I$;\smallskip\\
for $\bS=\bS^-_4,$ we have that $v^+=w_2I$ and $v^-=w_1I$;\smallskip\\
for $\bS=\bS^+_6,$ $v^{++}=I$, $v^{+-}=w_1w_2I$, $v^{-+}=w_2w_3I$, $v^{--}=w_1w_3I$;\smallskip\\
for $\bS=\bS^-_6,$ $v^{++}=w_3I$, $v^{+-}=w_1w_2w_3I$, $v^{-+}=w_2I$ and $v^{--}=w_1I$.
\end{rem}

In fact, we have constructed a~complete orthogonal Appell system for the complex Hilbert space
$L^2(\bB_m,\bS)\cap \Ker \pa$ of $L^2$-integrable monogenic functions $g:\bB_m\to\bS$.
Indeed, using Theorem \ref{GT_spinor}, we easily obtain the following result.

\begin{thm}\label{tTaylor_spinor} Let $m\geq 3$ and let $\bS$ be a~basic spinor representation for $Spin(m)$. 

\begin{itemize}
\item[(a)] Then an orthogonal basis of the space $L^2(\bB_m,\bS)\cap \Ker \pa$ is formed by the polynomials
$f^{\nu}_{k,\mu}$ for $k\in\bN_0$, $\mu\in J^m_k$ and $\nu\in\cS^m$.
Here the basis elements $f^{\nu}_{k,\mu}$ are defined in Theorem \ref{GT_spinor}.

\item[(b)]
Each function $g\in L^2(\bB_m,\bS)\cap \Ker \pa$ has a~unique orthogonal series expansion
\begin{equation}\label{taylor_spinor}
g = \sum_{k=0}^{\infty}\sum_{\nu\in\cS^m} \sum_{\mu\in J^m_k} \mathbf{t}^{\nu}_{k,\mu}(g)\;f^{\nu}_{k,\mu}
\end{equation}
for some complex coefficients $\mathbf{t}^{\nu}_{k,\mu}(g)$. 

In addition, let $g=\sum_{\nu\in\cS^m} g^{\nu} v^{\nu}$ for some complex functions $g^{\nu}$ on $\bB_m$.
Then, for $\mu=(k_{m-1},\ldots,k_2)\in J^m_k$ and $\nu=(\pod\nu,\pm)\in\cS^m$, we have that
\begin{equation*}
\mathbf{t}^{\nu}_{k,\mu}(g)
=\frac{1}{k!}\;\pa_{\pm}^{k_2}\;\pa^{k_3-k_2}_{x_3}\cdots\pa^{k-k_{m-1}}_{x_m}  g^{\nu}(x)|_{x=0}
\end{equation*}
with $\pa_{\pm}=(1/2)(\pa_{x_1}\pm i\pa_{x_2})$.
\end{itemize}
\end{thm}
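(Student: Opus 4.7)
The proof follows the strategy of Theorem~\ref{tTaylorHarm} and Theorem~\ref{tTaylor_CA} applied to the spinor setting. For part $(a)$, I combine the standard fact that $L^2(\bB_m,\bS)\cap\Ker\pa$ coincides with the $L^2$-closure of the orthogonal direct sum $\bigoplus_{k=0}^{\infty} \cM_k(\bR^m,\bS)$ (the spinor analogue of the decomposition invoked at the end of the proof of Theorem~\ref{tTaylorHarm}) with Theorem~\ref{GT_spinor}$(i)$, which supplies an $L^2$-orthogonal basis of each finite-dimensional piece $\cM_k(\bR^m,\bS)$. The union of these bases over $k$ yields the orthogonal basis announced in $(a)$, and the existence and uniqueness of the series expansion in $(b)$ then follow from standard Hilbert space theory.

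For the coefficient formula, the plan is to apply the scalar differential operator $D:=\pa_{\pm}^{k_2}\pa^{k_3-k_2}_{x_3}\cdots\pa^{k-k_{m-1}}_{x_m}$ term by term to the orthogonal expansion $g=\sum_{k',\mu',\nu'}\mathbf{t}^{\nu'}_{k',\mu'}(g)\,f^{\nu'}_{k',\mu'}$, to extract the $v^\nu$-component, and to evaluate at $x=0$. This is permitted because $L^2$-monogenic functions on $\bB_m$ are real-analytic, so the series converges locally uniformly on compact subsets together with all its derivatives. Being scalar, $D$ commutes with right multiplication by the constant spinors $v^{\nu'}$; being of total order $k$ applied to $k'$-homogeneous polynomials, it kills everything identically if $k'<k$ and produces a homogeneous polynomial of positive degree (hence vanishing at the origin) if $k'>k$. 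The Appell property Theorem~\ref{GT_spinor}$(ii)$ then furnishes the diagonal value $Df^\nu_{k,\mu}=k!\,v^\nu$.

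The step I expect to be the main technical obstacle is verifying that, for $k'=k$ but $(\mu',\nu')\neq(\mu,\nu)$, the $v^\nu$-coefficient of $Df^{\nu'}_{k,\mu'}$ vanishes at the origin---this is the spinor analogue of the off-diagonal cancellation implicit in the proof of Theorem~\ref{tTaylorHarm}. The strategy is to use the substitution $(x_1-e_{12}x_2)^{k_2'}v^{\nu'}=(x_1\mp' ix_2)^{k_2'}v^{\nu'}$ coming from $e_{12}v^{\nu'}=\pm' iv^{\nu'}$, which already appears in the proof of Theorem~\ref{GT_spinor}$(ii)$: it replaces the leading Clifford factor of $f^{\nu'}_{k,\mu'}$ by a complex scalar that commutes with all the remaining $X$-factors, so that $\pa_{\pm}^{k_2}$ either annihilates the contribution outright (when the last signs of $\nu$ and $\nu'$ disagree, since then $\pa_{\pm}(x_1\mp' ix_2)=0$) or reduces the remaining computation to the cross-term analysis of the scalar harmonic basis that underlies the proof of Theorem~\ref{tTaylorHarm}, governed by Theorem~\ref{tAPHarm}$(iii)$. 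Combining the diagonal Appell value with these off-diagonal vanishings produces $Dg^\nu(x)|_{x=0}=k!\,\mathbf{t}^\nu_{k,\mu}(g)$, which is the claimed formula.
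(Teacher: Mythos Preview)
Your overall strategy---deducing (a) from the $L^2$-closure of $\bigoplus_k\cM_k(\bR^m,\bS)$ together with Theorem~\ref{GT_spinor}(i), and then reading off the coefficient formula from the Appell identity \eqref{AP_spinor}---is exactly the route the paper takes; the paper merely says the result follows ``using Theorem~\ref{GT_spinor}''.

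There is, however, a gap in your off-diagonal argument for $k'=k$. Commuting the complex scalar $(x_1\mp' ix_2)^{k'_2}$ past the $X$-factors is legitimate, but the factors $X^{(k_r-k_{r-1})}_{r,k_{r-1}}$ themselves depend on $x_1,x_2$ (through $|x|$ and through $\pod x\,e_r$). Hence from $\pa_\pm(x_1\mp' ix_2)=0$ alone you cannot conclude that $\pa_\pm^{k_2}$ annihilates the whole product, nor does the remaining computation reduce to the scalar harmonic cross-terms governed by Theorem~\ref{tAPHarm}(iii): the $X$-factors are genuinely Clifford-valued and mix the $x_1,x_2$ variables with the higher ones.

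A cleaner way to obtain the coefficient formula---and one that bypasses any separate off-diagonal analysis in the spinor setting---is to push the eigenvalue relation $e_{12}v^{\nu'}=\pm' i\,v^{\nu'}$ up to the level of the operator $\pa_{12}$. For any $\bS$-valued function $h=\sum_{\nu'}h^{\nu'}v^{\nu'}$ with complex components $h^{\nu'}$ one has
\[
\pa_{12}h=\tfrac12\sum_{\nu'}\bigl(\pa_{x_1}h^{\nu'}+e_{12}\,\pa_{x_2}h^{\nu'}\bigr)v^{\nu'}
=\sum_{\nu'}(\pa_{\pm'}h^{\nu'})\,v^{\nu'},
\]
so $(\pa_{12}h)^\nu=\pa_\pm h^\nu$, while $(\pa_{x_r}h)^\nu=\pa_{x_r}h^\nu$ for $r\ge 3$. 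Taking the $v^\nu$-component of the Clifford algebra formula in Theorem~\ref{tTaylor_CA}(b) then gives the desired spinor formula immediately; this is precisely the relation $\mathbf{t}_{k,\mu}(g)=\sum_\nu \mathbf{t}^\nu_{k,\mu}(g)\,v^\nu$ recorded in the remark following Theorem~\ref{tTaylor_spinor}.
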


For a~function $g\in L^2(\bB_m,\bS)\cap \Ker \pa$, we call the orthogonal series expansion \eqref{taylor_spinor} its generalized Taylor series.

\begin{rem}
Of course, there is a~close connection between the generalized Taylor series expansions from Theorem \ref{tTaylor_CA} and Theorem \ref{tTaylor_spinor}.   
Indeed, we can always realize the spinor space $\bS$ inside the Clifford algebra $\bC_m$
and then, for each $g\in L^2(\bB_m,\bS)\cap \Ker \pa$, we have that
$$\mathbf{t}_{k,\mu}(g)=\sum_{\nu\in\cS^m} \mathbf{t}^{\nu}_{k,\mu}(g)\; v^{\nu}.$$ 
\end{rem}

\subsection*{Acknowledgments}

I am grateful to V. Sou\v cek for useful conversations.
The financial support from the grant GA 201/08/0397 is gratefully acknowledged.
This work is also a part of the research plan MSM 0021620839, which is financed by the Ministry of Education of the Czech Republic.



\begin{thebibliography}{99}
\def\topsep{0pt}
\def\parsep{0pt plus 5pt minus 1pt}
\def\itemsep{-0.5ex} 
\small
%
\bibitem{AAR} G.~E.\ Andrews, R.\ Askey, R. Roy, Special functions, Encyclopedia of mathematics and its applications, vol.\ 71,  Cambridge University Press, Cambridge, 1999.
%
\bibitem{Bock2010c} S. Bock, Orthogonal Appell bases in dimension 2,3 and 4, in: T.E. Simos, G. Psihoyios, Ch. Tsitouras (Eds.), Numerical Analysis and Applied Mathematics, AIP Conference Proceedings, vol. 1281, American Institute of Physics,  Melville, NY, 2010, pp. 1447--1450.
%
\bibitem{Bock2009} S. Bock, \"{U}ber funktionentheoretische {M}ethoden in der r\"{a}umlichen
{E}lastizit\"{a}tstheorie (German), Ph.D thesis, Bauhaus-University, Weimar, (url:
http://e-pub.uni-weimar.de/frontdoor.php?source\_opus=1503, date: 07.04.2010), 2009.
%
\bibitem{BG} S. Bock and K. G\"urlebeck, On a~generalized Appell
system and monogenic power series, Math. Methods Appl. Sci. 33 (2010)
394--411.
%
\bibitem{BGLS} S.\ Bock, K.\ G\"urlebeck, R.\ L\'{a}vi\v{c}ka, V.\ Sou\v{c}ek, The Gel'fand-Tsetlin bases for spherical monogenics in dimension 3,  arXiv:1010.1615v2 [math.CV], 2010, to appear in Rev. Mat. Iberoamericana.
%
\bibitem{BDS} F. Brackx, R. Delanghe, F. Sommen, Clifford analysis, Pitman, London, 1982.
%
\bibitem{ckH} F.\ Brackx, H.\ De Schepper, R.\ L\'{a}vi\v{c}ka, V.\ Sou\v{c}ek,
The Cauchy- Kovalevskaya Extension Theorem in Hermitean Clifford Analysis, arXiv:1101.4516v1 [math.CV], 2011, to appear in J. Math. Anal. Appl. 
%
\bibitem{GTBH} F.\ Brackx, H.\ De Schepper, R.\ L\'{a}vi\v{c}ka, V.\ Sou\v{c}ek,
Gelfand-Tsetlin Bases of Orthogonal Polynomials in Hermitean Clifford Analysis, arXiv:1102.4211v1 [math.CV], 2011, to appear in Math. Methods Appl. Sci. 
%
\bibitem{cac} I. Ca\c c\~ao, Constructive approximation by monogenic polynomials, Ph.D thesis, Univ. Aveiro, 2004.
%
\bibitem{BockCacGue} I. Ca\c c\~ao, K. G\"urlebeck, S. Bock, On derivatives of spherical monogenics, Complex Var. Elliptic Equ. 51 (2006)(811), 847--869.
%
\bibitem{CacGueBock} I. Ca\c c\~ao, K. G\"urlebeck, S. Bock, Complete orthonormal systems of spherical monogenics - a constructive approach, in: 
L.H. Son, W. Tutschke, S. Jain (Eds.), Methods of Complex and
Clifford Analysis, Proceedings of ICAM, Hanoi, SAS International Publications, 2004.
%
\bibitem{CacGueMal} I. Ca\c c\~ao, K. G\"urlebeck, H.R. Malonek, Special monogenic polynomials and $L_2$-approximation, 
Adv. appl. Clifford alg. 11 (2001)(S2) 47--60.
%
\bibitem{DLS} R. Delanghe, R. L\'avi\v cka, V. Sou\v cek, On polynomial solutions of generalized Moisil-Th\'eodoresco systems and Hodge systems, 
Adv. appl. Clifford alg. 21 (2011)(3), 521-530.
%
\bibitem{DLS2} R. Delanghe, R. L\'avi\v cka, V. Sou\v cek,
The Fischer decomposition for Hodge-de Rham systems in Euclidean spaces, arXiv:1012.4994v1 [math.CV], 2010, to appear in Math. Methods Appl. Sci. 
%
\bibitem{DLS4} R. Delanghe, R. L\'avi\v cka, V. Sou\v cek,
The Gelfand-Tsetlin bases for Hodge-de Rham systems in Euclidean spaces, arXiv:1012.4998v1 [math.CV], 2010, submitted. 
%
\bibitem{DSS} R. Delanghe, F. Sommen, V. Sou\v cek, Clifford algebra and spinor-valued functions, Kluwer Academic Publishers, Dordrecht, 1992.
%
\bibitem{GT} I.~M.~Gelfand, M.~L.~Tsetlin, { Finite-dimensional representations of groups of orthogonal
matrices}  (Russian), Dokl. Akad. Nauk SSSR 71 (1950), 1017-1020. English transl. in: I. M.
Gelfand, Collected papers, Vol II, Berlin, Springer-Verlag, 1988, pp. 657-661.
%
\bibitem{GM} J.~E. Gilbert, M.~A.~M. Murray, Clifford Algebras and Dirac Operators in Harmonic Analysis,
Cambridge University Press, Cambridge, 1991.
%
\bibitem{GHS} K.\ G\"{u}rlebeck, K.\ Habetha, W.\ Spr\"{o}\ss ig, 
Holomorphic functions in the plane and $n$-dimensional space. Translated from the 2006 German original, with cd-rom (Windows and UNIX), Birkh\"{a}user Verlag (Basel, 2008).
%
\bibitem{GS} K.\ G\"urlebeck, W.\ Spr\"o\ss ig, 
Quaternionic and Clifford Calculus for Physicists and Engineers, J.\ Wiley \& Sons (Chichester, 1997).
%
\bibitem{lavSL2} R.~L\'avi\v cka, Canonical bases for sl(2,C)-modules of spherical monogenics in dimension 3, Arch. Math.(Brno) 46 (2010) (5), 339-349.
%
\bibitem{lav_isaac09} R. L\'avi\v cka, The Fischer Decomposition for the $H$-action and Its Applications, In: Hypercomplex analysis and applications, 
I. Sabadini and F. Sommen (eds.), Trends in Mathematics, Springer Basel AG, 2011, pp. 139-148.
%
\bibitem{step2} R.~L\'avi\v cka, V.~Sou\v cek, P.~Van Lancker, Orthogonal basis for spherical monogenics by step two
branching, Ann. Global Anal. Geom., 2011, DOI 10.1007/s10455-011-9276-y
%
\bibitem{mol} A.~I.~Molev, Gelfand-Tsetlin bases for classical Lie algebras,
in: M. Hazewinkel (Ed.), Handbook of Algebra, Vol. 4, Elsevier, 2006, pp. 109-170.
%
\bibitem{som} F. Sommen, Spingroups and spherical means III, Rend. Circ. Mat. Palermo
(2) Suppl. No 1 (1989) 295-323.
%
%
\bibitem{van} P. Van Lancker, Spherical Monogenics: An Algebraic Approach, Adv. appl. Clifford alg. 19 (2009) 467-496.
%
\end{thebibliography}
\end{document}